\numberwithin{equation}{section}
\newtheorem{thm}{Theorem}
\newtheorem*{prob*}{Problem}
\newtheorem{prop}[thm]{Proposition}
\newtheorem{lem}[thm]{Lemma}
\newtheorem{cor}[thm]{Corollary}
\newtheorem*{iprob*}{Problem}
\theoremstyle{definition}
\newtheorem*{defi*}{Definition}
\newtheorem{rem}[thm]{Remark}
\newtheorem*{acks*}{Acknowledgements}
\newcommand{\sU}{\mathscr{U}}
\newcommand{\ZZ}{\mathbf{Z}}
\newcommand{\RR}{\mathbf{R}}
\newcommand{\NN}{\mathbf{N}}
\newcommand{\CC}{\mathbf{C}}
\newcommand{\QQ}{\mathbf{Q}}
\newcommand{\SL}{\mathbf{SL}}
\newcommand{\PSL}{\mathbf{PSL}}
\newcommand{\SSS}{\mathbf{S}}
\newcommand{\Sym}{{\rm Sym}}
\newcommand{\se}{\subseteq}
\newcommand{\inv}{^{-1}}
\newcommand{\lra}{\longrightarrow}
\newcommand{\wt}{\widetilde}
\newcommand{\wh}{\widehat}
\newcommand{\ol}{\overline}
\DeclareMathOperator{\Hom}{Hom}
\DeclareMathOperator{\Homeo}{Homeo}
\newcommand{\cont}{\mathfrak c}
\newcommand{\alo}{\aleph_0}
\title[Ulam's problem for nilpotent groups]{Lie groups as permutation groups:\\ Ulam's problem in the nilpotent case}
\date{October 2021}
\author[Nicolas Monod]{Nicolas Monod}
\address{EPFL, Switzerland}
\begin{document}

\begin{abstract}
Ulam asked whether every connected Lie group can be represented on a countable structure. This is known in the linear case. We establish it for the first family of non-linear groups, namely in the nilpotent case. Further context is discussed to illustrate the relevance of nilpotent groups for Ulam's problem.
\end{abstract}
\maketitle


\section{Introduction}

Cayley's principle states that every group is a permutation group. In particular, every finite group can be faithfully represented in a symmetric group $\Sym(n)$ for some $n\in \NN$.

For infinite groups, the situation becomes immediately more interesting. We shall investigate it at the light of the following notion:

\begin{defi*}
A group is \textbf{countably representable} if it admits a faithful action on some countable set.
\end{defi*}

In other words, $G$ is countably representable if it can be realised as a subgroup of $\Sym(\alo)$. This is also equivalent to admitting a faithful linear representation on some countable vector space.

Of course the question of countable representation only arises for groups whose cardinality does not exceed the continuum cardinal $\cont=2^{\alo}$ since this is the size of $\Sym(\alo)$. On the other hand the question is non-trivial only for uncountable groups.

This leaves us with a very rich landscape of groups to investigate, because the groups of size~$\cont$ include all (non-discrete) Polish groups, thus in particular all (non-discrete) locally compact second countable groups, and still more particularly all (non-trivial) connected Lie groups.

At first sight, the familiar Lie groups have no obvious countable representation; such a representation would be non-continuous, even non-measurable, and indeed in Solovay's model~\cite{Solovay} they do not have any. But in November 1935, Schreier and Ulam proved the following (see Section~\ref{sec:prelim} for the elementary argument):

\begin{prop}[Schreier--Ulam]\label{prop:R}
The group $\RR$ is countably representable.
\end{prop}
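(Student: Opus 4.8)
The plan is to \emph{change the model} of $\RR$. Since any countable representation must be highly non-constructive (indeed non-measurable, as recalled above), there is no hope of writing one down for $\RR$ in its familiar topological guise; instead I would first replace $\RR$ by an abstractly isomorphic group that visibly carries a faithful action on a countable set. The natural candidate is $\QQ^{\NN}$, the full countable power of the additive group of rationals, where the coordinate projections immediately supply such an action.

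Concretely, I would proceed in two steps. First, I would check that $\RR\cong\QQ^{\NN}$ \emph{as abstract groups}: both are $\QQ$-vector spaces, and a $\QQ$-vector space of uncountable cardinality $\kappa$ has dimension exactly $\kappa$ (if its dimension is $d\ge 1$ then its cardinality is $\max(d,\alo)$). Hence $\dim_{\QQ}\RR=|\RR|=\cont$ and $\dim_{\QQ}\QQ^{\NN}=|\QQ^{\NN}|=\alo^{\alo}=\cont$, so, picking Hamel bases, the two are isomorphic as $\QQ$-vector spaces and in particular as groups; this is the step where the Axiom of Choice is used. Second, I would exhibit $\QQ^{\NN}$ as a permutation group: on the countable set $X=\bigsqcup_{n\in\NN}\QQ_n$, a disjoint union of countably many copies of $\QQ$, let $(q_n)_{n\in\NN}\in\QQ^{\NN}$ act by translating $\QQ_n$ through $q_n$. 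This is patently an action by permutations of $X$, and it is faithful, since an element fixing every point of $X$ has all coordinates zero. Combining the two steps, $\RR\cong\QQ^{\NN}\hookrightarrow\Sym(\alo)$.

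I do not expect a genuine computational obstacle here; the only real content is the idea behind the first step, namely abandoning the topology of $\RR$ altogether and exploiting the flexibility of a continuum-dimensional $\QQ$-vector space. As a variant one could instead use the $p$-adic model $\RR\cong\QQ_p$ (again both are $\QQ$-vector spaces of dimension $\cont$) and let $\QQ_p$ act by translation on the countable set $\bigsqcup_{n}\QQ_p/p^n\ZZ_p$, the kernels $p^n\ZZ_p$ having trivial intersection; this makes transparent the underlying principle that an abelian group is countably representable as soon as it admits countably many subgroups of countable index with trivial total intersection.
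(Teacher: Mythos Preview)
Your proof is correct and is essentially the paper's own argument: the paper observes that $\QQ^{\NN}$ is countably representable (by the product lemma, which is exactly your explicit translation action on $\bigsqcup_n \QQ_n$) and then identifies it with $\RR$ as $\QQ$-vector spaces of dimension~$\cont$. Your $p$-adic variant is a pleasant alternative but not needed here.
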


This is Item~95 in the translated Scottish Book~\cite{Scottish58}, although we only found it as a question in the original version~\cite{Scottish_halpha}.

Ulam later asked a question that remains open to this day:

\begin{prob*}[Ulam]
Is every connected Lie group countably representable?
\end{prob*}

\noindent
(See~\cite[II.7]{Ulam_book60} and~\cite[V.2]{Ulam_book64}.)

This question is really the key to the wider world of locally compact groups, because the solution to Hilbert's fifth problem leads to the following fact (see Section~\ref{sec:other}):

\begin{prop}\label{prop:Hilbert}
If the answer to Ulam's Problem is affirmative, then every locally compact second countable group is countably representable.
\end{prop}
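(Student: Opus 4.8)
The plan is to combine the structure theory of locally compact groups provided by the solution to Hilbert's fifth problem with a few elementary permanence properties of countable representability. First I would record three closure properties of the class $\mathcal C$ of countably representable groups. \textbf{(a)} $\mathcal C$ is closed under subgroups, since a faithful action restricts to any subgroup. \textbf{(b)} $\mathcal C$ is closed under countable direct products: if $G_n$ acts faithfully on the countable set $X_n$, then $\prod_n G_n$ acts faithfully on the countable disjoint union $\coprod_n X_n$, i.e.\ $\prod_n \Sym(X_n)\le \Sym\!\left(\coprod_n X_n\right)$. \textbf{(c)} If $H\le G$ is a subgroup with $[G:H]\le\alo$ and $H\in\mathcal C$, then $G\in\mathcal C$: choosing a faithful $H$-action on a countable set $X$, let $G$ act on the induced set $\Omega=(G\times X)/{\sim}$, where $(gh,x)\sim(g,hx)$ for $h\in H$; using a transversal $T$ of $G/H$ one sees that $\Omega=\{\,[(t,x)]:t\in T,\ x\in X\,\}$ is countable, while the stabiliser in $G$ of the point $[(1,x)]$ equals $\Stab_H(x)$, so the kernel of the $G$-action is contained in $\bigcap_{x\in X}\Stab_H(x)=\{1\}$ and the action is faithful.

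Next, let $G$ be locally compact and second countable. By the Gleason--Yamabe theorem, $G$ contains an open subgroup $G'$ admitting arbitrarily small compact normal subgroups with Lie quotient. As $G'$ is again second countable, I can fix a countable neighbourhood basis of the identity in $G'$ and, for each basis member, such a compact normal subgroup; replacing the $n$-th one by the intersection $K_n$ of the first $n$ — still compact and normal, and with Lie quotient since $G'/(K\cap K')$ is a closed subgroup of the Lie group $G'/K\times G'/K'$ — yields a decreasing sequence $(K_n)$ of compact normal subgroups of $G'$ with $\bigcap_n K_n=\{1\}$ and each $G'/K_n$ a Lie group, necessarily second countable. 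Hence $G'$ embeds into $\prod_n (G'/K_n)$.

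By (a) and (b) it now suffices to show that every second countable Lie group $L$ lies in $\mathcal C$. Its identity component $L^0$ is an open, connected Lie group, hence lies in $\mathcal C$ precisely by the hypothesis that Ulam's Problem has an affirmative answer, and $[L:L^0]\le\alo$ because an open subgroup of a second countable (indeed Lindelöf) group has countable index; so $L\in\mathcal C$ by (c). Therefore $G'\in\mathcal C$, and since $G'$ is open in $G$ we have $[G:G']\le\alo$, so one last application of (c) yields $G\in\mathcal C$.

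The essential technical point — the one to state and check carefully — is property (c): the passage from $H$ to a countable-index overgroup through the induced permutation action, where both the countability of $\Omega$ and the faithfulness of the resulting $G$-action have to be verified. Everything else is an assembly of the Gleason--Yamabe theorem with standard facts (closed subgroups of Lie groups are Lie; open subgroups of second countable groups have countable index; a quotient of a second countable group by a compact subgroup is second countable), together with the observation that Ulam's hypothesis is invoked only for \emph{connected} Lie groups.
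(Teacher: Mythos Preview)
Your proof is correct and follows essentially the same route as the paper: reduce to an open subgroup that is an inverse limit (equivalently, embeds in a countable product) of Lie groups via Gleason--Yamabe/Hilbert's fifth problem, apply Ulam's hypothesis to the connected components of these Lie quotients, and then climb back up to $G$ using the induced-action lemma for countable-index overgroups. The only cosmetic difference is that the paper first passes through van Dantzig's theorem to arrange that the Lie quotients have \emph{finitely} many components, whereas you allow countably many and invoke property~(c) one extra time.
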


On the ominous side, some of the most familiar Polish groups spectacularly fail to be countably representable. For instance, it follows from the Rosendal--Solecki automatic continuity theorem~\cite{Rosendal-Solecki} that any action of $G=\Homeo(\SSS^1)$ or $G=\Homeo(\RR)$ on a countable set is trivial on the neutral component of $G$ (which is of index two in this case). This was generalised to arbitrary compact manifolds by Mann~\cite{Mann16}. Another very familiar group without any non-trivial action on a countable set is the unitary group of the infinite-dimensional separable Hilbert space, by Tsankov's automatic continuity~\cite{Tsankov13}.

In the positive direction, \Cref{prop:R} can be generalised to show that in the abelian case there is no obstruction at all beyond the obvious size restriction (see Section~\ref{sec:other} for two proofs):

\begin{prop}[de Bruijn~\cite{deBruijn64}]\label{prop:abelian}
Every abelian group is countably representable as long as its cardinality does not exceed $\cont$.
\end{prop}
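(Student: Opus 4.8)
The plan is to reduce to divisible abelian groups and then realise those as subgroups of a countable product of countable groups. I would first record two elementary facts. (a) A subgroup of a countably representable group is countably representable, simply by restricting the action. (b) If $C_n$ ($n\in\NN$) are countable groups, then $\prod_{n\in\NN}C_n$ is countably representable: it acts on the countable set $X=\bigsqcup_{n\in\NN}C_n$ by letting a tuple $(g_n)_n$ act on the $n$-th copy of $C_n$ through left translation by $g_n$, and this action is faithful since left translation on a group is free. Thus it will be enough to embed the given group into such a countable product.

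Next I would pass to the divisible case. A countable abelian group is countably representable by acting on itself, so assume $G$ is infinite with $|G|\le\cont$. Writing $G=F/K$ with $F$ free abelian of rank $|G|$, the map $F\hookrightarrow F\otimes_\ZZ\QQ$ exhibits $G=F/K$ as a subgroup of $(F\otimes_\ZZ\QQ)/K$, which is divisible (a quotient of a divisible group) and has cardinality $\le|G|\le\cont$. So from now on $D$ is a divisible abelian group with $|D|\le\cont$, and by the structure theorem for divisible abelian groups $D\cong\bigoplus_{I_0}\QQ\oplus\bigoplus_p\bigoplus_{I_p}\ZZ(p^\infty)$ (with $p$ running over the primes and $|I_0|,|I_p|\le|D|\le\cont$); enlarging the index sets, $D$ embeds into $\big(\bigoplus_{\cont}\QQ\big)\oplus\bigoplus_p\big(\bigoplus_{\cont}\ZZ(p^\infty)\big)$.

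The heart of the matter is to place the two building blocks inside countable products of countable groups. For the torsion-free part, $\QQ^{\NN}$ is a $\QQ$-vector space whose dimension, being infinite, equals its cardinality $\cont$, so $\QQ^{\NN}\cong\bigoplus_{\cont}\QQ$. For the Prüfer part, I would check that the torsion subgroup $T$ of $(\QQ/\ZZ)^{\NN}$ is divisible: a torsion tuple has some finite exponent $N$ coordinatewise, and dividing it by $m$ can be carried out inside the finite cyclic group $\tfrac1{mN}\ZZ/\ZZ$ in each coordinate. Being divisible and torsion, $T\cong\bigoplus_p\bigoplus_{\kappa_p}\ZZ(p^\infty)$, and $\kappa_p=\dim_{\ZZ/p}T[p]=\dim_{\ZZ/p}(\ZZ/p)^{\NN}=\cont$ (again by the cardinality argument); hence $\bigoplus_{\cont}\ZZ(p^\infty)$ embeds into $(\QQ/\ZZ)^{\NN}$. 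Combining these embeddings with the trivial inclusion of a direct sum into the corresponding direct product, $D$ embeds into $\QQ^{\NN}\times\prod_p(\QQ/\ZZ)^{\NN}$, which is a countable product of countable groups. By (a) and (b) this product is countably representable, hence so are $D$ and $G$.

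I expect the main conceptual obstacle to be precisely this last embedding step: one must notice that the uncountable groups $\bigoplus_{\cont}\QQ$ and $\bigoplus_{\cont}\ZZ(p^\infty)$ — which at first sight look far too large to act faithfully on a countable set — already live inside $\QQ^{\NN}$ and $(\QQ/\ZZ)^{\NN}$, the point being that replacing a countable direct sum of a countable group by the corresponding direct product inflates the rank to the full continuum. Once this is seen, the remaining ingredients (the divisible hull, the classification of divisible abelian groups, and the two bookkeeping lemmas) are entirely standard. As a sanity check, $\QQ^{\NN}\cong\bigoplus_{\cont}\QQ\cong\RR$ as abstract groups, so this line of argument recovers \Cref{prop:R} rather than relying on it.
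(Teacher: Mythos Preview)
Your argument is correct and is essentially the paper's ``algebraic proof'': pass to a divisible hull of size $\leq\cont$, decompose via the structure theorem, and embed each summand $\bigoplus_{\cont}\QQ$ and $\bigoplus_{\cont}\ZZ(p^\infty)$ into a countable power of a countable group (the paper uses $\ZZ(p^\infty)^{\alo}$ directly rather than going through $(\QQ/\ZZ)^{\NN}$, but this is the same idea). The paper also offers a second, ``analytic'' proof via Pontryagin duality and Engelking's separability theorem for compact groups of weight $\leq\cont$, which you might find of independent interest.
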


Returning to Lie groups and Ulam's Problem, there is a much stronger result than \Cref{prop:R}, namely:

\begin{thm}[Thomas~\cite{Thomas99}]
Every \emph{linear} Lie group is countably representable.
\end{thm}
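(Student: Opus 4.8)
The plan is to replace the intractable field $\CC$ by an isomorphic but arithmetic field over which linear algebra becomes countable combinatorics. Since a subgroup of a countably representable group is itself countably representable, and any linear Lie group embeds into some $\GL_n(\RR)\le\GL_n(\CC)$, it suffices to prove that $\GL_n(\CC)$ is countably representable for each $n$. Now $\CC$ is algebraically closed, of characteristic $0$, and of cardinality $\cont$, hence of transcendence degree $\cont$ over $\QQ$; by Steinitz's theorem it is therefore isomorphic as a field to $K:=\overline{\QQ((t))}$, the algebraic closure of the Laurent series field $\QQ((t))$. The advantage of $K$ is that its $t$-adic valuation $v$, with valuation ring $\mathcal{O}$ and maximal ideal $\mathfrak{m}$, has all its local invariants countable: the value group is $\QQ$, the residue field is $\overline{\QQ}$ (and $v$ is trivial there), and, by inspection of Puiseux expansions, the tail group $K/\mathcal{O}$ is countable as well. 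None of these is visible inside $\CC$, which is why the change of model is the crucial move.

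From this I would extract a countable $\GL_n(K)$-set. Every $\mathcal{O}$-lattice in $K^n$ has a triangular $\mathcal{O}$-basis (Hermite normal form over the valuation ring $\mathcal{O}$), equivalently $\GL_n(K)=T(K)\,\GL_n(\mathcal{O})$ for the upper-triangular Borel $T$; counting the resulting data (diagonal valuations in $\QQ^{n}$, off-diagonal entries in finitely many copies of $K/\mathcal{O}$) shows that the set $\mathcal{L}$ of lattices, i.e. $\GL_n(K)/\GL_n(\mathcal{O})$, is countable, and hence so is $\widetilde{\mathcal{L}}:=\{(\Lambda,\bar v):\Lambda\in\mathcal{L},\ \bar v\in\Lambda/\mathfrak{m}\Lambda\setminus\{0\}\}$, whose fibres are copies of $\overline{\QQ}^{\,n}\setminus\{0\}$. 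The group acts by $g\cdot(\Lambda,\bar v)=(g\Lambda,\overline{gv})$. I would then compute that the core of $\GL_n(\mathcal{O})$ in $\GL_n(K)$ is the group $\mathcal{O}^{\times}I$ of scalar units — an element fixing every lattice is, after conjugating by diagonal and by elementary matrices, forced to be a scalar, and must then be a unit — so that the kernel of the action on $\widetilde{\mathcal{L}}$ is exactly $(1+\mathfrak{m})I$.

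To finish, invoke \Cref{prop:abelian}: $K^{\times}$ is abelian of cardinality $\le\cont$, hence countably representable, so composing with $\det\colon\GL_n(K)\to K^{\times}$ yields a second action of $\GL_n(K)$ on a countable set, with kernel $\SL_n(K)$. On the disjoint union of the two countable sets the kernel is $(1+\mathfrak{m})I\cap\SL_n(K)$; any $\mu I$ in it satisfies $\mu\in1+\mathfrak{m}$ and $\mu^{n}=1$, and since $v$ is trivial on $\overline{\QQ}$ the $n$ distinct $n$-th roots of unity remain distinct modulo $\mathfrak{m}$, forcing $\mu=1$. Thus the combined action is faithful, proving $\GL_n(\CC)$ — and hence every subgroup of $\GL_n(\CC)$, in particular every linear Lie group — countably representable. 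I expect the real work to sit in the middle step: everything hinges on the simultaneous countability of the value group, residue field, and tails $K/\mathcal{O}$ of the arithmetic model, and on the bookkeeping that pins the kernel on $\widetilde{\mathcal{L}}$ down to $(1+\mathfrak{m})I$ rather than something larger; the determinant direction is then disposed of cheaply via de Bruijn's abelian theorem.
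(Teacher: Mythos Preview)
Your argument is correct and rests on the same decisive idea as the paper's proof: transport $\CC$ via Steinitz to the Puiseux field $K$ over $\overline\QQ$ and exploit that the value group, residue field, and principal parts $K/\mathcal O$ are all countable, so that $\GL_n(\mathcal O)$ (and its congruence refinements) are cocountable in $\GL_n(K)$.

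The packaging of the endgame differs. The paper passes to a single cocountable congruence subgroup $\SL_m(V_\CC;L_\CC)$, then invokes the abstract simplicity of $\PSL_m(\CC)$ to conclude that the coset action is already faithful on $\PSL_m(\CC)$, into which every linear Lie group embeds; no appeal to the abelian case is needed. You instead work on $\GL_n$, identify the core of $\GL_n(\mathcal O)$ as the scalar units, refine by the residue action to cut the kernel down to $(1+\mathfrak m)I$, and then kill that via $\det$ together with \Cref{prop:abelian}. Both routes are clean; yours trades the (nontrivial) simplicity of $\PSL_m(\CC)$ for de~Bruijn's abelian theorem and a small Hensel-type observation about roots of unity in $1+\mathfrak m$. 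The paper's choice has the side benefit that the congruence filtration $\SL_m(V_\CC;L_n)$ is exactly what is reused later in the nilpotent argument, so it sets up the sequel more directly.
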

\noindent
(This was first established in~\S2 of~\cite{Thomas99}; the proof was rediscovered by Kallman~\cite{Kallman00} and later Ershov--Churkin~\cite{Ershov-Churkin}; see Section~\ref{sec:nilpotent} for the argument.)

\medskip

Therefore it remains to settle Ulam's Problem for non-linear Lie groups. The first example is Birkhoff's reduced Heisenberg group~\cite{Birkhoff36}, a central extension of $\RR^2$ with center $\SSS^1$. More generally, a connected nilpotent Lie groups is non-linear unless it is a semi-direct product of a \emph{simply connected} group by a torus~\cite[16.2.7]{Hilgert-Neeb}. Thus nilpotent Lie groups include a large family of non-linear groups; we can answer Ulam's question in this case:


\begin{thm}\label{thm:nilpotent}
Every nilpotent connected Lie group is countably representable.
\end{thm}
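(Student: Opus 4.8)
The plan is to combine Thomas's theorem with the structure theory of connected nilpotent Lie groups and an analysis of central extensions. Write $G=\wt G/\Gamma$, where $\wt G$ is the universal cover --- a \emph{simply connected} nilpotent Lie group, hence linear by~\cite[16.2.7]{Hilgert-Neeb} and therefore countably representable by Thomas's theorem --- and $\Gamma$ is a discrete central subgroup, necessarily isomorphic to $\ZZ^r$. Equivalently, $G$ contains its maximal compact subgroup, a central torus $T\cong(\SSS^1)^r$, with $G/T$ simply connected and hence again linear and countably representable. A countable representation of $G/T$ pulls back to a countable family of subgroups of $G$ of countable index whose cores intersect exactly in $T$; since for a central element membership in a subgroup is equivalent to membership in its core, the problem reduces to producing countably many subgroups $H_i\le G$ of countable index with $\bigcap_i(H_i\cap T)=\{1\}$. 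It is worth stressing that the point stabilisers so obtained cannot be taken normal: for non-linear $G$ --- e.g.\ Birkhoff's reduced Heisenberg group --- there is no normal subgroup of countable index meeting $T$ in a proper subgroup, so subdirect products of Lie-group quotients do not suffice and one must genuinely exploit non-normality.

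The next step is to determine which subgroups of $T$ occur as $H\cap T$ for $H\le G$ of countable index. If $H$ has countable index then so do its image $L$ in $Q:=G/T$ and $S:=H\cap T$ in $T$, and $H$ is an extension of $L$ by $S$; conversely such an $H$ exists precisely when the restriction to $L$ of the commutator $2$-cocycle of the central extension $1\to T\to G\to Q\to1$ becomes a coboundary after pushing forward along $T\to T/S$, i.e.\ when its class vanishes in $\hh^2(L;T/S)$. The useful point is that $T$ is divisible, so every such $T/S$ of countable index is a \emph{countable divisible} abelian group; if one moreover takes $L$ to be a rational subspace of $Q$ of countable $\QQ$-codimension, then $\mathrm{Ext}^1_{\ZZ}(L,T/S)=0$, whence $\hh^2(L;T/S)\cong\Hom(\wedge^2 L,T/S)$ and the obstruction collapses to the requirement that the alternating part of the commutator pairing of $G$, restricted to $L$ and reduced modulo $S$, vanish. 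Tracing this back through the Baker--Campbell--Hausdorff law on $\wt G$, that alternating part is the Lie bracket of $Q$ into the Lie algebra of $T$; expressed over a Hamel basis of $\RR$ over $\QQ$ it is a fixed alternating $\QQ$-bilinear form on a $\QQ$-vector space of dimension $\cont$, with values in $\RR^r$, built from the multiplication of $\RR$.

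So everything reduces to the following: given a subgroup $K\le(\RR,+)$ of countable index, drawn from a family rich enough that the corresponding subgroups $S\le T$ intersect trivially --- such a family exists at the level of the abelian group $T$ by de Bruijn's theorem --- find a $\QQ$-subspace $L$ of the Lie algebra of $Q$ of countable codimension on which the commutator form takes values in the preimage of $S$ in $\RR^r$. This final step is the crux and the genuinely new ingredient beyond Thomas and de Bruijn: it is a Diophantine-flavoured statement about the interplay between the $\QQ$-linear and the multiplicative structure of $\RR$, and the obvious guesses fail --- for instance one cannot take $L$ to be (the Lie algebra of) a product $B\times B$ for a subring $B\le\RR$, since $\RR$ has essentially no subrings of countable index. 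I expect the resolution to require choosing the Hamel basis and the family of subgroups $K$ in tandem, using the freedom in the choice of a complement to the torsion of $T$ and hence of the relevant hyperplanes of $\RR$, and treating the torsion of $T$ separately: for the non-linear examples this torsion lies in $[G,G]$, hence in the kernel of every finite-dimensional representation of $G$ and so is invisible to Thomas's construction, but it can be isolated by working with quotients $T/S$ having a Prüfer summand, for which the constraint on $L$ relaxes.
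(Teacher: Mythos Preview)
Your proposal is not a proof: you explicitly identify a ``crux'' step that you do not resolve, offering only a speculation (``I expect the resolution to require\ldots''). Up to that point you have merely performed a reduction, and that reduction is essentially the paper's own: write $G=\wt G/\Gamma$, use that $G/T$ is simply connected hence linear to obtain cocountable subgroups whose common core is $T$, and then seek further cocountable subgroups meeting $T$ trivially. So the entire content of the theorem lies in the step you leave open.

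The paper resolves that step by a route orthogonal to your cohomological framework. Rather than analysing an obstruction in $\hh^2(L;T/S)$, it works directly inside an embedding $\wt G\hookrightarrow U_m\subset\SL_m(\RR)$ and uses the congruence subgroups $\SL_m(V_\CC;L_n)$ coming from the Puiseux-series valuation already present in Thomas's argument. The new ingredient is a short valuation-theoretic lemma: on any finitely generated additive subgroup of $\CC\cong K$, the Puiseux valuation takes only finitely many values. Applied to a matrix entry that separates a given $\zeta\in Z\setminus\Gamma$ from $\Gamma$, this shows that some congruence subgroup $\wh K_n$ misses the entire coset $\zeta\Gamma$; its image $K_n<G$ is then cocountable and omits the image of $\zeta$ in $T$. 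An induction on $d=\rank\Gamma$ reduces everything to the case $d=1$, where $Z\cong\RR$ and a single matrix-entry homomorphism suffices. No Hamel bases, no cohomology, and no choice of subgroups of $T$ ``in tandem'' are needed.

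A secondary technical issue: your identification $\hh^2(L;T/S)\cong\Hom(\wedge^2 L,T/S)$ assumes $L$ abelian, but $Q=G/T$ is a simply connected nilpotent Lie group of arbitrary class, and a ``rational subspace of $Q$'' is not a subgroup under the Baker--Campbell--Hausdorff product unless it happens to be a Lie subalgebra. So even the reformulation of the obstruction would need repair before one could attempt the missing step.
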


It is fair to ask whether this result just follows formally from the case of abelian groups by taking successive extensions and invoking a general stability property of the class of countably representable groups. It turns out that there is no such general stability: McKenzie~\cite{McKenzie71} has constructed a nilpotent group $M$ which is not countably representable, but is a central extension
\[
0 \lra \ZZ \lra M \lra B \lra 0
\]
where $B$ is the free $\ZZ /2\ZZ$-module of rank~$\cont$. Of course this group is not a Lie group, but it is nilpotent and of size~$\cont$. This shows in particular that countable representability is not preserved by central extensions (see Section~\ref{sec:Churkin}). A close variant of this group was considered by Churkin~\cite{Churkin05}. As we shall see, these obstructions to represantibility still hold for the corresponding extensions with finite center $\ZZ/2\ZZ$ instead of $\ZZ$, and for certain extensions of the form
\[
0 \lra \RR \lra H \lra \RR \lra 0.
\]
In fact, Churkin conjectured that his group is not a subgroup of any separable (Hausdorff) topological group~\cite[Rem.~3]{Churkin05}. We confirm this:

\begin{thm}\label{thm:Churkin:top}
Any homomorphism from McKenzie's or Churkin's nilpotent group to any separable Hausdorff topological group is trivial on the center.

The same statement holds for homomorphisms to Lindel{\"o}f Hausdorff topological groups.
\end{thm}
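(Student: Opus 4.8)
I would run the whole argument on the soft side of topology. The one standard fact I would invoke is that a topological group $G$ is \emph{$\omega$-narrow} --- for every neighbourhood $V$ of $1$ there is a countable $S\se G$ with $G=\bigcup_{s\in S}sV$ --- as soon as it is separable (where $gV^{-1}$ meets the countable dense set) or Lindel\"of (where $\{gV\}_{g\in G}$ has a countable subcover); this is what unifies the two assertions of the statement. From it I only need: \emph{in an $\omega$-narrow group, for a symmetric neighbourhood $V$ of $1$, any set $A$ with $a^{-1}a'\notin V^{2}$ for distinct $a,a'\in A$ is countable} (cover $A$ by countably many translates $sV$; each meets $A$ in at most one point).

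\textbf{Reduction.} Next I would isolate from the constructions recalled in Section~\ref{sec:Churkin} the following configuration, attached to any nontrivial element $z$ of the relevant center: an uncountable index set $I$ and elements $(a_i)_{i\in I},(b_i)_{i\in I}$ generating a nilpotent group of class $\le 2$ with $[a_i,b_j]=z^{\delta_{ij}}$ and $[a_i,a_j]=[b_i,b_j]=1$ for all $i,j$ (for the cyclic centers this is needed only for a generator, arbitrary multiples being obtained by raising the $a_i$ to powers; for the $\RR$-center it is exactly what dictates the choice of the cocycle defining $H$). Granting this, \Cref{thm:Churkin:top} reduces to the assertion: a homomorphism $\phi$ from a group carrying such a configuration into an $\omega$-narrow Hausdorff group satisfies $\phi(z)=1$.

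\textbf{The core step.} To prove this I would argue by contradiction: suppose $w:=\phi(z)\ne 1$. As $z$ is central, $w$ centralises $\langle\phi(a_i),\phi(b_i):i\in I\rangle$ and all the elements I manipulate lie there, so $w$ acts as a central element. Using that $G$ is Hausdorff I pick a symmetric open $U\ni 1$ with $w\notin UU$, then a symmetric open $V\ni 1$ with $V^{2}\se U$ and with $y^{-1}xy\in U$ for $x,y\in V^{2}$. Applying $\omega$-narrowness first to $\{\phi(a_i)\}$ and then to $\{\phi(b_i)\}$, I obtain an uncountable $J\se I$ with all $\phi(a_i)$ $(i\in J)$ in a single translate of $V$ and likewise all $\phi(b_i)$, so that $\phi(a_i)^{-1}\phi(a_{i'})$ and $\phi(b_i)^{-1}\phi(b_{i'})$ lie in $V^{2}$ for $i,i'\in J$. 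Fixing three distinct $i,j,k\in J$, I set $\delta:=\phi(a_j)^{-1}\phi(a_k)$ and $\eta:=\phi(b_i)^{-1}\phi(b_j)$, both in $V^{2}$. A class-$\le 2$ computation gives $[\delta,\phi(b_l)]=w^{\delta_{kl}-\delta_{jl}}$, and $h\mapsto[\delta,h]$ is multiplicative into $\langle w\rangle$ on the subgroup generated by the $\phi(b_l)$; hence $[\delta,\eta]=w^{-1}$, i.e.\ $\eta^{-1}\delta\eta=\delta w^{-1}$. But $\delta,\eta\in V^{2}$ force $\eta^{-1}\delta\eta\in U$ and $\delta\in U$, so $w^{-1}=\delta^{-1}(\eta^{-1}\delta\eta)\in UU$ and thus $w\in UU$: contradiction. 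Therefore $\phi(z)=1$, and since $z$ was an arbitrary nontrivial central element, $\phi$ is trivial on the center.

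\textbf{Where the difficulty is.} The topological input and the final computation are routine; the substance is the structural input of Section~\ref{sec:Churkin}, namely exhibiting, inside these concrete non-linear nilpotent groups, uncountable class-$\le 2$ configurations witnessing every central element --- that is the genuinely combinatorial part of McKenzie's and Churkin's constructions (and the design of $H$). The one delicate point inside the core step is the role of the \emph{third} index: $\delta$ is built from a pair of $a$'s and $\eta$ from a pair of $b$'s chosen so that exactly one of the two $b$-indices is $\delta_{\bullet\bullet}$-paired with one of the two $a$-indices, which makes the $w$-exponent equal to $-1$ rather than an even number. This matters, since controlling only $w^{2}$ would leave the case ``$\phi(z)$ has order $2$'' untreated --- and without passing to a topological group one cannot a priori exclude it.
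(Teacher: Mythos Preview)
Your proof is correct and follows essentially the same route as the paper: both use separability/Lindel\"of to cover the image by countably many translates of a small symmetric neighbourhood, extract by pigeonhole an uncountable index set on which the $\phi(a_i)$ and the $\phi(b_i)$ each lie in a single coset, and then invoke Churkin's three-index trick to realise $c^{\pm1}$ as a commutator $[\,\phi(a_j)^{-1}\phi(a_k),\,\phi(b_i)^{-1}\phi(b_j)\,]$ of two elements close to the identity. The only difference is cosmetic packaging of the final step: the paper writes the commutator directly as a word in $V^{16}\subseteq U$ and concludes $\pi(c)\in\bigcap U=\{1\}$, whereas you argue by contradiction and use continuity of conjugation to trap $w^{-1}$ in $UU$; the substance is identical.
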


\noindent
(This is a strengthening of not being countably representable since $\Sym(\alo)$ is Polish, in particular \emph{both} separable and Lindel{\"o}f.)

We note that Lindel{\"o}f groups include notably all compact groups and more generally any countable product of $\sigma$-compact groups, see~\cite[8.1(ii)]{ComfortHB}.

\bigskip

Besides nilpotent groups, perhaps the best known example of a non-linear Lie group is Cartan's example~\cite[\S V]{Cartan36} of the universal cover of $\SL_2(\RR)$, or already its double cover, the metaplectic group. 
There, the methods of the present note seem to fall short:

\begin{prob*}
Let $G=\SL_n(\RR)$ and let $\wt G$ be its universal cover. Is $\wt G$ countably representable?
\end{prob*}

\noindent
Recall that $\wt G$ is a central extension of $G$ by $\ZZ/2\ZZ$ when $n\geq 3$ and by $\ZZ$ when $n=2$. We observe below (\Cref{rem:SL}) that the approach used in the previous cases cannot work for these groups.

\begin{rem}
Ulam did not explicitly assume connectedness, but this is only a matter of terminology:

Under the convention that Lie groups are second countable, they have at most countably many connected components and hence we are immediately reduced to the connected case (\Cref{lem:induction} below).

If on the other hand no restriction is made, then we can trivially obtain disconnected Lie groups that are not countably representable by considering a suitable group as zero-dimensional Lie group.
\end{rem}

\begin{acks*}
Ulam's problem was first mentioned to me by Pierre de la Harpe. During earlier attempts for higher rank simple groups, I had the benefit and pleasure of conversations with Alex Lubotzky and with Matthew Morrow. I am indebted to Alexander Pinus for sending me the article~\cite{Churkin05}. I am grateful to Yves Cornulier for his comments on a first version; Yves alerted me to a gap in the earlier proof of Theorem~\ref{thm:Churkin:top} and provided me with additional historical references.
\end{acks*}


\section{Preliminary observations}\label{sec:prelim}

We call a subgroup $H<G$ \textbf{cocountable} if it is of countable index in $G$, i.e.\ if the coset space $G/H$ is countable.

Then $G$ is countably representable if and only if it admits a sequence of cocountable subgroups $G_n <G$ ($n\in \NN$) with trivial intersection $\bigcap_n G_n$. There is no loss of generality in assuming that this sequence is decreasing. We shall often use tacitly the fact that the intersection of a cocountable group with any subgroup remains cocountable in the latter.

\medskip

Since the (direct) product of any family of symmetric groups acts faithfully on the disjoint union of the corresponding sets, we have:

\begin{lem}\label{lem:product}
The class of countably representable groups is closed under finite and countable products.\qed
\end{lem}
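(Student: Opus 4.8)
The plan is to glue faithful actions of the factors together on a disjoint union. Suppose we are given countably many countably representable groups $G_i$, $i\in\NN$ — the case of a finite product is subsumed by letting the remaining $G_i$ be trivial — and for each $i$ a faithful action of $G_i$ on a countable set $X_i$. First I would form the disjoint union $X=\bigsqcup_{i\in\NN}X_i$, which is again countable, being a countable union of countable sets. Then I would let $G=\prod_{i\in\NN}G_i$ act on $X$ by the rule that $(g_i)_{i\in\NN}$ moves a point $x\in X_j$ according to its $j$-th coordinate, $(g_i)_i\cdot x = g_j\cdot x$. This is well defined precisely because the blocks $X_j$ are pairwise disjoint, and it is an action because each factor acts within its own block.

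The second step is the faithfulness check, which is immediate: if $(g_i)_i$ fixes every point of $X$, then $g_j$ fixes every point of $X_j$ for each $j$, so $g_j=1$ by faithfulness of the $G_j$-action, and hence $(g_i)_i$ is trivial. Phrased differently, one is observing that the natural block-preserving embedding $\prod_i\Sym(X_i)\hookrightarrow\Sym(X)$ restricts to an embedding of $\prod_i G_i$, together with $\Sym(X)\cong\Sym(\alo)$ when $X$ is countably infinite — and if $X$ turns out to be finite then $G$ is itself finite and Cayley's principle applies, or one simply enlarges $X$ by countably many fixed points.

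I do not expect a genuine obstacle here; the only points worth a remark are that the disjoint union remains countable and that the restriction to \emph{countable} products is essential — already $\RR^{\cont}$ has cardinality $2^{\cont}>\cont$, so it fails to be countably representable for cardinality reasons alone, and no analogue for uncountable products can hold. If one prefers to argue through the cocountable-subgroup criterion recalled above, the same construction appears as follows: choosing for each $i$ a decreasing sequence of cocountable subgroups $G_{i,n}<G_i$ with $\bigcap_n G_{i,n}=1$, the countable family of subgroups $G_{i,n}\times\prod_{j\ne i}G_j<G$ consists of cocountable subgroups (each with coset space $G_i/G_{i,n}$) whose overall intersection is trivial, which is exactly what the criterion requires.
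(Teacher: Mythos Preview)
Your argument is correct and is exactly the paper's approach: the one-line justification given before the lemma is that a product of symmetric groups acts faithfully on the disjoint union of the corresponding sets, which is precisely your construction. Your additional rephrasing via the cocountable-subgroup criterion is also fine but goes beyond what the paper records.
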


The class being closed under passing to subgroups, this further implies:

\begin{lem}\label{lem:inverse}
The class of countably representable groups is closed under taking inverse limits of countable inverse systems.\qed
\end{lem}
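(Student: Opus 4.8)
The plan is to reduce this immediately to \Cref{lem:product} together with the (already noted) fact that the class of countably representable groups is closed under passing to subgroups. The key observation is purely formal: if $\big(G_n, \varphi_{mn}\big)_{n\in\NN}$ is a countable inverse system of groups, then its inverse limit $\varprojlim_n G_n$ is, by its very construction, the subgroup of the direct product $\prod_{n\in\NN} G_n$ consisting of the coherent sequences $(g_n)_n$ satisfying $\varphi_{mn}(g_n)=g_m$ for all $m\le n$. Thus $\varprojlim_n G_n$ is a subgroup of a countable product of countably representable groups.

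So the argument runs as follows. First I would invoke \Cref{lem:product} to conclude that $\prod_{n\in\NN} G_n$ is countably representable, using that the index set $\NN$ is countable. Then I would note that $\varprojlim_n G_n \le \prod_{n\in\NN} G_n$ and apply the closure of the class under subgroups (a faithful action of a group on a countable set restricts to a faithful action of any subgroup), which yields that $\varprojlim_n G_n$ is countably representable. This completes the proof, and indeed the statement of the lemma can simply be annotated with \texttt{\textbackslash qed} as in the excerpt.

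There is essentially no obstacle here: the only thing worth being slightly careful about is the set-theoretic size, but since all the $G_n$ are assumed countably representable they each have cardinality at most $\cont$, and a countable product of sets of size at most $\cont$ again has size at most $\cont^{\alo}=\cont$, so no size obstruction arises for the product (this is in any case subsumed by \Cref{lem:product}). Hence the lemma is a direct corollary of \Cref{lem:product}.
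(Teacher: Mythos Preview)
Your proof is correct and is exactly the argument the paper has in mind: the sentence preceding the lemma (``The class being closed under passing to subgroups, this further implies:'') together with the \texttt{\textbackslash qed} indicates that the intended proof is precisely the observation that an inverse limit embeds in the countable product, whence \Cref{lem:product} and closure under subgroups finish it. The only cosmetic point is that a countable inverse system need not be indexed by $\NN$ but by an arbitrary countable directed set; your argument goes through verbatim in that generality.
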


The Schreier--Ulam proof that $\RR$ is countably representable was probably as follows.

\begin{proof}[Proof of \Cref{prop:R}]
By \Cref{lem:product}, the countable power $\QQ^\NN$ is countably representable. Since this group is uniquely divisible, it is a $\QQ$-vector space. Its dimension is $\cont$ and therefore it is isomorphic to $\RR$.
\end{proof}

We will establish countable representability for general abelian groups in Section~\ref{sec:other} but already record the following:

\begin{cor}\label{cor:circle}
The circle group $\SSS^1$ is countably representable.
\end{cor}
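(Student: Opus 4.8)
The plan is to produce an \emph{abstract} group isomorphism $\SSS^1 \cong (\QQ/\ZZ)\times\RR$ and then conclude via \Cref{lem:product} together with \Cref{prop:R}. The point worth isolating is that, although $\SSS^1$ is merely a quotient of $\RR$ and quotients need not inherit countable representability, here the quotient happens to be a (highly non-canonical) direct factor of a countably representable group.

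To build the isomorphism, I would regard $\RR$ as a vector space over $\QQ$ and let $L=\QQ\cdot 1$ be the rational line through $1$. Since $\ZZ\subseteq L$, choosing a $\QQ$-linear complement $V$ of $L$ in $\RR$ gives $\RR = L\oplus V$ and hence
\[
\SSS^1=\RR/\ZZ \;\cong\; (L/\ZZ)\oplus V \;=\; (\QQ/\ZZ)\oplus V .
\]
Comparing dimensions over $\QQ$, one has $\dim_\QQ V=\cont$ because $\dim_\QQ\RR=\cont$ while $\dim_\QQ L=1$; therefore $V\cong\RR$ as $\QQ$-vector spaces, in particular as abstract groups. This yields $\SSS^1\cong(\QQ/\ZZ)\times\RR$. (Alternatively one could invoke the structure theory of divisible abelian groups, but the splitting above is more self-contained.)

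It then remains to assemble the pieces: $\RR$ is countably representable by \Cref{prop:R}, while $\QQ/\ZZ$, being countable, acts faithfully on itself by translations and is therefore countably representable as well; hence so is their product by \Cref{lem:product}, and thus so is $\SSS^1$. I do not expect a genuine obstacle here — the only subtlety is that the splitting relies on a Hamel-type basis of $\RR$ over $\QQ$ and is consequently non-canonical and discontinuous, which is unavoidable for any countable representation of a non-trivial connected Lie group.
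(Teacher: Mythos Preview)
Your proof is correct and is essentially the paper's own argument spelled out in more detail: the paper also views $\RR$ as a $\QQ$-vector space, splits off the rational line to obtain $\RR/\ZZ\cong(\QQ/\ZZ)\oplus\RR$, and then invokes \Cref{prop:R} together with \Cref{lem:product}. Your added remarks on the non-canonical nature of the splitting and on why $V\cong\RR$ are accurate and simply make explicit what the paper leaves implicit.
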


\begin{proof}
Viewing again $\RR$ as a $\QQ$-vector space, we split off $\QQ$ and obtain an isomorphism $\RR/\ZZ \cong  \QQ/\ZZ \oplus \RR$; the result now follows from \Cref{prop:R} (and \Cref{lem:product}).
\end{proof}

Another basic hereditary property relies on the technique of induced actions:

\begin{lem}\label{lem:induction}
If $G$ admits a countably representable cocountable subgroup, then $G$ is itself countably representable.
\end{lem}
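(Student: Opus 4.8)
The plan is to use the standard construction of induced actions from permutation group theory, translated through the characterisation of countable representability in terms of cocountable subgroups recorded just above the statement. Suppose $H<G$ is cocountable and countably representable. Then $H$ acts faithfully on a countable set $X$; equivalently, by the discussion preceding \Cref{lem:product}, there is a decreasing sequence of cocountable subgroups $H_n<H$ with $\bigcap_n H_n=\{1\}$.

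The first step is to pass from the $H_n$ to subgroups of $G$. The obvious candidate is the \emph{normal core} of $H_n$ in $G$, but that need not be cocountable. A cleaner route is to work directly with the induced action: let $G$ act on the set $G/H \times X$ — or, more precisely, form the induced permutation action of $G$ on the set of functions, or simply observe that $G$ acts on the disjoint union $\bigsqcup_{gH\in G/H} g\cdot X$ by permuting the pieces according to the action on $G/H$ and acting within each piece via the conjugated copies of the $H$-action. Concretely, I would take the countable set $\Omega = G/H \times X$ — which is countable because $G/H$ is countable by hypothesis and $X$ is countable — and define the induced action in the usual way (pick coset representatives $t_{gH}$; for $\omega=(gH,x)$ and $\gamma\in G$, write $\gamma t_{gH}=t_{\gamma g H}\,h$ with $h\in H$ and set $\gamma\cdot(gH,x)=(\gamma g H,\ h\cdot x)$). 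This is the classical induced action and is routine to check it is a well-defined action.

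The key step is faithfulness. If $\gamma\in G$ fixes every point of $\Omega$, then in particular it fixes the $G/H$-coordinate of each point, so $\gamma$ fixes every coset, hence $\gamma\in \bigcap_{g} gHg\inv$; and then the second-coordinate condition at cosets of the form $(gH,x)$ forces the relevant conjugate of $\gamma$ into $H$ to act trivially on $X$, so by faithfulness of the $H$-action on $X$ that conjugate is trivial, hence $\gamma=1$. Thus the induced action is faithful, and $G$ is countably representable. Equivalently, in the language of cocountable subgroups: set $G_n = \bigcap_{g\in T} t_g H_n t_g\inv$ — wait, this is an intersection over a countable set $T$ of coset representatives, each term is cocountable (being conjugate to a cocountable subgroup, or rather the intersection with the relevant stabiliser), so $G_n$ is cocountable, and $\bigcap_n G_n$ is trivial by the same argument as above.

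The main obstacle — really the only subtlety — is checking that the $G_n$ thus defined are genuinely cocountable in $G$, not merely in some stabiliser: one needs that a countable intersection of cocountable subgroups is cocountable (clear, since $G/\bigcap = $ injects into a countable product of countable sets... no, into a countable \emph{union}? — one must be slightly careful here), together with the remark made in the excerpt that the intersection of a cocountable subgroup with any subgroup stays cocountable in the latter. Getting the bookkeeping of coset representatives and the 2-cocycle-free verification of the induced-action axioms right is the part that requires attention; everything else is immediate from \Cref{lem:product} and the preliminary characterisation.
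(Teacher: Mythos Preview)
Your induced-action argument is correct and is exactly the paper's approach: the paper just packages it coordinate-free as $Y=(G\times X)/G_0$ with $G$ acting on the right $G$-factor, which spares you the coset-representative bookkeeping and the cocycle check.

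One caution about your ``equivalently'' reformulation: a countable intersection of cocountable subgroups need \emph{not} be cocountable (e.g.\ the very sequence witnessing that $\RR$ is countably representable has trivial, hence continuum-index, intersection), so defining $G_n=\bigcap_{t\in T} t H_n t^{-1}$ does not obviously give cocountable subgroups; the clean route is just to take point-stabilisers of the induced action on $\Omega$, which are automatically cocountable because $\Omega$ is countable.
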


\begin{proof}
Suppose that the group $G$ admits a cocountable subgroup $G_0<G$ which acts faithfully on a countable set $X$. Define $Y=(G\times X)/G_0$, where $G_0$ acts diagonally on $G\times X$, from the left on $G$. Then $Y$ is countable since it admits a bijection with $(G_0\backslash G) \times X$. We endow $Y$ with a $G$-action by letting $G$ act by right multiplication on $G$ and trivially on $Y$, noting that this action on $G\times X$ descends indeed to $Y$. The resulting action is faithful.
\end{proof}

\section{Proof of Theorem~\ref{thm:nilpotent}}\label{sec:nilpotent}

We begin by recalling Thomas's proof for linear Lie groups:

Let $K$ be the field of Puiseux series over $\ol \QQ$ endowed with its valuation $v\colon K \to \QQ \cup\{\infty\}$; let $V$ be its valuation ring. Then $\SL_n(V)$ is a cocountable subgroup of $\SL_n(K)$, see~\cite[Thm.~2.5]{Thomas99}. The argument for that is a refinement of the fact that \emph{discrete} valuations give rise to an action on the Bruhat--Tits building, which is countable: here $v$ is not discrete on $K$, but $K$ is the countable union of the subfields $\ol\QQ((t^{1/q}))$ of Laurent series on $t^{1/q}$, on each of which $v$ is discrete.

By the Newton--Puiseux theorem, $K$ is algebraically closed. Since moreover $K$ has characteristic zero and cardinality $\cont$, there is a field isomorphism $K\cong \CC$. We denote by $V_\CC$ the image in $\CC$ of the valuation ring of $K$ and by $L_\CC$ the corresponding maximal ideal. Define  the congruence subgroup $\SL_m(V_\CC; L_\CC)$ to be the kernel of the reduction map $\SL_m(V_\CC) \to  \SL_m(\ol \QQ)$. Then $\SL_m(V_\CC; L_\CC)$ is a cocountable subgroup of $\SL_m(\CC)$ since $\SL_m(\ol \QQ)$ is countable.

At this point it follows already that $\PSL_m(\CC)$ is countably representable (since it is simple); finally, any linear Lie group can be embedded into some $\PSL_m(\CC)$.

\medskip

Now we give a more explicit expansion of this argument in order to be able to adapt it for some non-linear groups.

Define the decreasing sequence of ideals $L_n < V_\CC$ by $\{x:  v(x) > n \}$ under the chosen isomorphism $K\cong \CC$. Thus the decreasing sequence of groups $\SL_m(V_\CC; L_n)$ has trivial intersection.

\begin{lem}\label{lem:congr}
The congruence subgroups $\SL_m(V_\CC; L_n)$ are cocountable in $\SL_m(\CC)$.
\end{lem}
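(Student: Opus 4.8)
The plan is to wedge $\SL_m(V_\CC;L_n)$ between $\SL_m(V_\CC)$ — which is cocountable in $\SL_m(\CC)$, this being Thomas's statement $\SL_m(V)<\SL_m(K)$ carried across the isomorphism $K\cong\CC$ — and $\SL_m(\CC)$ itself. Since cocountability is transitive (an index is countable as soon as it factors through countable indices), it is enough to see that $\SL_m(V_\CC;L_n)$ is cocountable in $\SL_m(V_\CC)$; transporting back to $K$, this means $[\SL_m(V):\SL_m(V;L_n)]$ is countable, where now $L_n=\{x\in V:v(x)>n\}$. Because $n\ge 0$, this $L_n$ really is an ideal of $V$, so reduction modulo $L_n$ is a ring homomorphism $V\to V/L_n$ and induces a group homomorphism $\SL_m(V)\to\SL_m(V/L_n)$ whose kernel is exactly $\SL_m(V;L_n)$. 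Hence $[\SL_m(V):\SL_m(V;L_n)]\le|\SL_m(V/L_n)|\le|V/L_n|^{m^2}$, and everything comes down to showing that $V/L_n$ is a countable set.

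That last point is the whole content of the lemma, and the only step where care is needed. For $n=0$ it is immediate, as $V/L_0$ is the residue field $\ol\QQ$; but for $n>0$ the ideal $L_n$ is far from maximal, and $V/L_n$ records every term of valuation at most $n$ of a Puiseux series, so at first glance one fears continuum-many ``truncations''. The resolution lies in the layered definition $K=\bigcup_{q\ge 1}K_q$ with $K_q=\ol\QQ((t^{1/q}))$: on $K_q$ the valuation is \emph{discrete}, with uniformiser $t^{1/q}$, so for its valuation ring $V_q=\ol\QQ[[t^{1/q}]]$ one has $V_q\cap L_n=(t^{1/q})^{\lfloor nq\rfloor+1}V_q$, whence $V_q/(V_q\cap L_n)\cong\ol\QQ[u]/(u^{\lfloor nq\rfloor+1})$ is a finite-dimensional $\ol\QQ$-algebra and in particular countable. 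As $V=\bigcup_q V_q$, we conclude that $V/L_n=\bigcup_q V_q/(V_q\cap L_n)$ is a countable union of countable sets, hence countable. This finishes the proof modulo the bookkeeping with multiplicativity of indices.

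One can also bypass Thomas's theorem as a black box and argue one layer at a time: $\SL_m(V_q)$ is cocountable in $\SL_m(K_q)$, being the stabiliser of the standard lattice $V_q^m\subseteq K_q^m$ for the action on the set of $V_q$-lattices in $K_q^m$, which is countable since every such lattice sits between $(t^{1/q})^{-N}V_q^m$ and $(t^{1/q})^{N}V_q^m$ for some $N$ and hence corresponds to a submodule of a fixed countable module; combining this with the congruence computation above and assembling over the countable increasing union $\SL_m(K)=\bigcup_q\SL_m(K_q)$ (using $\SL_m(V;L_n)\cap\SL_m(K_q)=\SL_m(V_q;V_q\cap L_n)$) yields the lemma again. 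In either presentation, the single genuine obstacle is the countability of $V/L_n$; the remainder is routine.
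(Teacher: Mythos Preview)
Your proof is correct and follows essentially the same route as the paper: reduce to cocountability of $\SL_m(V;L_n)$ in $\SL_m(V)$ via Thomas's result, then to countability of $V/L_n$, and establish the latter using the stratification $V=\bigcup_q \ol\QQ[[t^{1/q}]]$. The paper's version is terser---it just observes that every class in $V/L_n$ is represented by a finite sum $\sum_{j=0}^{qn}\lambda_j t^{j/q}$ for some $q$---while you spell out the quotient $V_q/(V_q\cap L_n)\cong\ol\QQ[u]/(u^{\lfloor nq\rfloor+1})$ and also sketch a self-contained variant avoiding the black-box appeal to Thomas; but the underlying idea is the same.
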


\begin{proof}
It suffices to show that $\SL_m(V_\CC; L_n)$ is cocountable in $\SL_m(V_\CC)$, i.e.\ that the ring $V_\CC / L_n$ is countable. This is the case because any given element of this ring is represented by a \emph{finite} sum
\[
\sum_{j =0}^{q n} \lambda_j t^{j/q}
\]
for some choice of denominator $q\in \NN_{>0}$ and elements $\lambda_j\in\ol \QQ$ indexed by $j\in \NN$; here $t$ is the formal variable of Puiseux series.
\end{proof}

We will need the following additional observation:

\begin{lem}\label{lem:val}
Let $C<\CC$ be a finitely generated subgroup and let $z\in \CC$ be an element not in $C$. Then there is $n$ such that $L_n$ does not meet the coset $z+ C$.
\end{lem}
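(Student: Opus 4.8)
The plan is to deduce the statement from a boundedness property of $v$ on finite-dimensional $\ol\QQ$-subspaces of $K\cong\CC$. I would first note that the hypothesis $z\notin C$ enters only through the consequence that $0\notin z+C$ (since $C$ is a subgroup), i.e.\ that $v$ is finite on the whole coset $z+C$. Hence it suffices to bound $v$ from above on $z+C$: if $v(z+c)\le n_0$ for every $c\in C$, then for any integer $n\ge n_0$ the ideal $L_n=\{x:v(x)>n\}$ misses $z+C$, which is exactly the claim.

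To bound $v$ on the coset, I would enclose it in a finite-dimensional space. Fix a finite generating set $e_1,\dots,e_r$ of $C$ and let $W\se K$ be the $\ol\QQ$-linear span of $z,e_1,\dots,e_r$; then $z+C\se W$ and $\dim_{\ol\QQ}W<\infty$. Each of these finitely many Puiseux series lies in some $\ol\QQ((t^{1/q_i}))$, so taking $q$ a common multiple gives $W\se E:=\ol\QQ((t^{1/q}))$, a field on which $v$ is (up to the factor $\tfrac1q$) the $t^{1/q}$-adic valuation, in particular valued in $\tfrac1q\ZZ\cup\{\infty\}$.

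The one genuinely non-formal step is that $v$ is bounded above on $W\setminus\{0\}$; everything else is bookkeeping. I would prove it by triangularization: $v$ is also bounded below on $W\setminus\{0\}$ (expand an element in a fixed basis and use the ultrametric inequality together with $v|_{\ol\QQ^\times}=0$), so it attains a minimum on $W\setminus\{0\}$, say at $g_1$; the set $W_1=\{x\in W:v(x)>v(g_1)\}\cup\{0\}$ is an $\ol\QQ$-subspace of dimension $\dim W-1$, because from any element of minimal valuation one can subtract the suitable $\ol\QQ$-multiple of $g_1$ to raise its valuation. Iterating yields a basis $g_1,\dots,g_d$ of $W$ with $v(g_1)<\dots<v(g_d)$, and then every nonzero $x=\sum_i\lambda_i g_i$ has $v(x)=\min\{v(g_i):\lambda_i\ne 0\}\le v(g_d)$. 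Setting $n_0:=v(g_d)$ and combining with the reduction of the first paragraph completes the argument. The main thing to watch is that this boundedness genuinely uses that $W$ is finite-dimensional over $\ol\QQ$ — it fails for subspaces of infinite dimension, e.g.\ the $\ol\QQ$-span of all the $t^{1/q}$ — so the reduction to a \emph{finitely generated} $C$ is essential.
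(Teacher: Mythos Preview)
Your argument is correct, and it is a genuinely different route from the paper's. The paper stays on the level of finitely generated \emph{abelian groups}: it sets $D=\langle z\rangle + C$ and proves, by induction on the free-abelian rank $r$ of $D$, that $v$ takes at most $r+1$ distinct values on $D$; the base case uses $v(px)=v(x)$ for nonzero integers $p$, and the induction step peels off the elements whose residue at the minimal valuation vanishes. You instead \emph{linearise}: you pass to the $\ol\QQ$-span $W$ of $z$ and a generating set of $C$, use that finitely many Puiseux series lie in some $\ol\QQ((t^{1/q}))$ so that $v(W)\subseteq\tfrac1q\ZZ\cup\{\infty\}$ is discrete, and then run a Gram--Schmidt/leading-term subtraction to produce an $\ol\QQ$-basis $g_1,\dots,g_d$ with $v(g_1)<\dots<v(g_d)$, whence $v\le v(g_d)$ on $W\setminus\{0\}$. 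Your approach yields the slightly stronger conclusion that $v$ is bounded on the whole $\ol\QQ$-subspace (this is essentially the classical fact that any finite-dimensional subspace over a complete---or here, discretely valued---field admits an orthogonal basis for the valuation), while the paper's approach avoids introducing the $\ol\QQ$-structure altogether and gives the sharper combinatorial count of exactly how many values occur. Both lean on the same ultrametric mechanism; the only point where you are a touch terse is the existence of the minimum of $v$ on $W\setminus\{0\}$, but you had already set up the discreteness of $v|_E$ precisely so that ``bounded below'' upgrades to ``minimum attained''.
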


\begin{proof}
Viewing everything in $K$ rather than in $\CC$, the claim is that $v$ is uniformly bounded over $z+C$; note that this coset does not contain~$0$. Letting $D$ be the group generated by $C$ and $z$, it suffices to prove the following claim:

For every finitely generated subgroup $D<K$, the valuation $v$ is uniformly bounded over the non-zero elements of $D$.

The group $D$ is free abelian of finite rank $r\geq 1$. We shall prove by induction on $r$ that $v$ takes at most $r+1$ distinct values on  $D$. The base case $r=1$ holds because $v( p x) = v(x)$ for all $x\in K$ and all non-zero $p\in\ZZ$; this gives the two values $v(x)$ and $v(0)=\infty$ on $D$ in rank one.

We turn to the induction step for $r\geq 2$. Let $x_1, \ldots, x_r$ be a basis of $D$. We order the basis so that for some $1\leq s \leq r$ we have
\[
v(x_1) = \cdots = v(x_s) < v(x_j) \kern3mm\forall\, j>s.
\]
Given any element $x= p_1 x_1 + \cdots + p_r x_r$ in $D$ (with $p_i\in \ZZ$), write $x'= p_1 x_1 + \cdots + p_s x_s$ and consider its residue $\omega\in\ol \QQ$ at $v(x_1)$, i.e.\ the coefficient of $t^{v(x_1)}$ in $x'$. We have $v(x) = v(x') = v(x_1)$ unless $\omega$ vanishes. However, $\omega=0$ defines a subgroup $D_0$ of $D$; this subgroup is still free abelian but of smaller rank since $x\mapsto \omega$ defines a non-trivial torsion-free quotient of $D$. Therefore, the induction hypothesis limits at $r$ the number of distinct values on $D_0$, which completes the induction step once we add the value $v(x_1)$.
\end{proof}

(In fact we will only use the case where $C$ is cyclic, but this is still rank $r=2$ in the above proof.)

\begin{proof}[End of proof of Theorem~\ref{thm:nilpotent}]
Let $G$ be a nilpotent connected Lie group. We refer to~\cite[\S11.2]{Hilgert-Neeb} (especially Thm.~11.2.10 therein) for background on the structure of $G$. We recall in particular the following. The fundamental group of $G$ is a free abelian group of finite rank $d\geq 0$. Moreover, denoting by $\wt G$ the universal cover of $G$, there is a discrete central subgroup $\Gamma<\wt G$ isomorphic to $\pi_1(G)$ with $\wt G \to \wt G/\Gamma \cong G$ implementing the covering morphism. In addition, $\Gamma$ is a lattice in a central subgroup $Z \cong \RR^d$ of $\wt G$, so that $T=Z/\Gamma$ is a central $d$-torus in $G$.

Our proof is by induction on $d$. The base case $d=0$ holds by Thomas's result since $G$ is then simply connected and hence linear~\cite[16.2.7]{Hilgert-Neeb}.

The case $d=1$ is the main case because the general case is easily reduced to it; hence we treat it separately. Since $Z$ is connected, the quotient $H= G/T \cong \wt G/Z$ is simply connected. Therefore, by the case $d=0$, there is a countable family of cocountable subgroups $H_n<H$ such that every non-trivial element of $H$ is outside some $H_n$. Taking pre-images gives cocountable subgroups $G_n<G$ such that every element of $G$ \emph{not in T} lies outside $G_n$ for some $n$. It therefore suffices to produce another sequence of cocountable subgroups $K_n<G$ such that every non-trivial element of $T$ lies outside some $K_n$.

Since $\wt G$ is simply connected, it is linear; more precisely, by the Lie--Kolchin theorem, we can identify it with a Lie subgroup of the upper unitriangular subgroup $U_m$ of $\SL_m(\RR)$ for some $m$. We define $\wh K_n< \wt G$ by taking the intersection of $\wt G$ with the congruence subgroup $\SL_m(V_\CC; L_n)$ under such an embedding; thus $\wh K_n$ is cocountable in $\wt G$ by \Cref{lem:congr}. Let $K_n<G$ be the image of $\wh K_n$ in $G$, which is cocountable.

Let us reformulate for $\wh K_n < \wt G$ the desired property of $K_n$: we need to show that every element $\zeta\in Z< \wt G$ which is not in $\Gamma$ lies outside the group $\Gamma \wh K_n$ for some $n$. Indeed, $\Gamma \wh K_n$ is the pre-image of $K_n$ in $\wt G$. Equivalently, we need to prove that $\zeta\Gamma \cap \wh K_n =  \varnothing$.

The lower central series $U_m^i$ of $U_m$ is defined by $U_m^0=U_m$ and $U_m^{i+1} = [U_m^i, U_m]$. Concretely, the $(p,q)$-entry of a matrix in $U_m^i$ vanishes when $p<q \leq p+i$. Consider the largest $i$ such that $\zeta\in U_m^i$; since $\zeta$ is non-trivial, $i\leq m-2$. Moreover, the evaluation map $\epsilon\colon U_m \to \RR$ at a $(p, p+i+1)$-entry is a group homomorphism on  $U_m^i$, namely one of the canonical coordinates of $U_m^i/ U_m^{i+1} \cong \RR^{m-i-1}$. By maximality of $i$, we can choose $p$ such that $\epsilon(\zeta)$ is non-zero. Recalling that $Z$ has dimension $d=1$, it follows that $\epsilon$ restricts to an isomorphism $\epsilon|_Z\colon Z \to \RR$. In conclusion, $z=\epsilon(\zeta)$ lies outside $C=\epsilon(\Gamma)$. By \Cref{lem:val}, there is $n$ such that $z+C$ does not meet $L_n$. This implies that $\zeta\Gamma$ does not meet $\wh K_n$, as was to be shown.

Finally, we perform the induction step for $d\geq 2$. We choose two distinct elements $\gamma_1, \gamma_2$ of some basis of $\Gamma$, write $\Gamma_i=\langle \gamma_i \rangle$ for the group generated by $\gamma_i$ and $Z_i< Z$ for the one-parameter subgroup containing $\gamma_i$. The quotient $T_i = Z_i/ \Gamma_i$ is a central circle group in $G$ and thus we can form $H_i= G/T_i$. Then the fundamental group of $H_i$ is isomorphic to $\Gamma/ \Gamma_i$, e.g.\ because $\wt G /Z_i$ is a universal cover of $H_i$ since $Z_i$ is connected.

Thus the fundamental group of $H_i$ has rank $d-1$. By the induction hypothesis, there are two countable families $H_{i,n}$ of cocountable subgroups of $H_1$ respectively $H_2$ such that any non-trivial element of $H_i$ lies outside $H_{i,n}$ for some $n$. Taking pre-images, we obtain cocountable subgroups $G_{i,n}< G$ such that every $g\notin T_i$ lies outside some $G_{i,n}$. Since $T_1 \cap T_2$ is trivial, this completes the induction step.
\end{proof}

\begin{rem}\label{rem:SL}
Consider the question whether the double cover $\wt G$ of $G=\SL_3(\RR)$ is countably representable. Since $G$ is countably representable, it all amounts to finding a cocountable subgroup of $\wt G$ which meets trivially the kernel of the covering map. Equivalently, the question is:

\itshape
Does $G$ itself contain a cocountable subgroup that can be lifted to $\wt G$?\upshape

This cannot be proved by using as above a cocountable congruence subgroup $G_n = G\cap \SL_3(V_\CC;L_n)$. Indeed we claim that $G_n$ does not lift to $\wt G$.

The reason is as follows. The set $1+L_n$ meets $\RR$ densely because $L_n \cap \RR$ is a cocountable subgroup of $\RR$. Therefore, $1+L_n$ contains a negative real number $x$. It follows that $G_n$ contains the diagonal matrices with entries $(x, x\inv, 1)$ and $(x, 1,  x\inv)$ (noting that $1+L_n$ is a multiplicative subgroup of $V_\CC^\times$). The classical relation between Steinberg symbols and central extensions of $G$, as explained e.g.\ on p.~95 of Milnor's book~\cite{MilnorK}, shows that these two diagonal matrices do not admit commuting lifts in $\wt G$. This is because the Steinberg symbol for $\wt G$ is the Hilbert quadratic residue symbol (see p.~104 in~\cite{MilnorK}), but $x<0$ means that the Hilbert symbol is non-trivial. This excludes a minori any lift of the group $G_n$.
\end{rem}

\section{Auxiliary proofs}\label{sec:other}

The reduction from general locally compact second countable groups to connected Lie groups is standard:

\begin{proof}[Proof of Proposition~\ref{prop:Hilbert}]
Let $G$ be a locally compact second countable group. Denoting the neutral component of $G$ by $G_0$, the quotient $G/G_0$ is locally compact and totally disconnected; therefore, van Dantzig's theorem~\cite[p.~18]{vanDantzigPhD} implies that $G/G_0$ contains a compact-open subgroup $K$. We denote by $G_1<G$ the pre-image of $K$ in $G$; thus $G_1/G_0$ is compact.

Since $G$ is second countable, we can choose a countable base $\sU$ of neighbourhoods $U\subseteq G_1$ of the identity in $G_1$. By the solution of Hilbert's fifth problem (see e.g.~\cite[4.6]{Montgomery-Zippin}), there is a compact normal subgroup $N_U\lhd G_1$ contained in $U$ such that $G_1/N_U$ is a Lie group. Note that $G_1/N_U$ has finitely many connected components; therefore, since we assumed that the answer to Ulam's Problem is affirmative, $G_1/N_U$ is countably representable.

On the other hand, $G_1$ is the inverse limit of the system  $(G_1/N_U)_{U\in \sU}$. Since $\sU$ is countable, Lemma~\ref{lem:inverse} implies that $G_1$ is countably representable. Finally, we use again that $G$ is second countable to deduce that $G/G_1$ is countable since $G_1$ is open in $G$. Applying Lemma~\ref{lem:induction}, we conclude that $G$ itself is countably representable.
\end{proof}

Next, we propose two proofs that every abelian group is countably representable, unless its cardinality is too large. Yves Cornulier informed me that the second proof below was already known to de~Bruijn, see Thm.~4.3 in~\cite{deBruijn64}.

\begin{proof}[Analytic proof of Proposition~\ref{prop:abelian}]
Let $G$ be an abelian group with $|G|\leq \cont$. We consider $G$ as a discrete group, so that its Pontryagin dual $\widehat G=\Hom(G, \SSS^1)$ is a compact group. Then the topological weight of $\widehat G$, namely the smallest cardinality of a base for its topology, is also~$\leq \cont$, see~\cite[7.76]{Hofmann-Morris}. It now follows from Engelking's theorem~\cite[Thm.~10]{Engelking65} that $\widehat G$ is separable; the fact that Engelking's theorem applies to compact groups is due to the fact that they are \emph{dyadic spaces}, see e.g.~\cite[10.40]{Hofmann-Morris}. We can therefore choose a sequence of characters $f_n\colon G\to \SSS^1$ that is dense in $\widehat G$.

Since $\SSS^1$ is countably representable by \Cref{cor:circle}, there is a sequence of morphisms $h_m$ from $\SSS^1$ to countable quotients $h_m(\SSS^1)$ such that the intersection of the kernels of all $h_m$ is trivial. It now suffices to verify that for every non-trivial $g\in G$ there is a pair $(n,m)$ such that $h_m(f_n(g))$ is non-trivial.

To this end, Pontryagin duality ensures that there is a character $f\in \widehat G$ such that $f(g)$ is non-trivial. By density of $(f_n)$, we can fix $n$ such that $f_n(g)$ is non-trivial in $\SSS^1$, and it remains only to choose $h_m$ in terms of $f_n(g)$.
\end{proof}

\begin{proof}[Algebraic proof of Proposition~\ref{prop:abelian}]
Let $G$ be an abelian group with $|G|\leq \cont$. The injective hull $D$ of $G$ contains $G$, and its construction shows that $D$ still has size~$\leq \cont$ (see e.g.\ the proof of Thm.~17 in~\cite[III]{Griffith_book}). It therefore suffices to consider divisible abelian groups such as $D$. By the structure theory of divisible abelian groups (see e.g.~\cite[Thm.~23.1]{Fuchs_bookI}), $D$ is a sum of a $\QQ$-vector space and of $p$-torsion parts, where $p$ ranges over all primes. It suffices to consider each piece individually because of \Cref{lem:product}. The $\QQ$-vector space is handled by the argument of \Cref{prop:R} since it has dimension at most~$\cont$.

The $p$-torsion part is of the form $\bigoplus_\kappa \ZZ(p^\infty)$ for some cardinal $\kappa$, where $\ZZ(p^\infty)$ is the Pr\"ufer $p$-group (see again~\cite[Thm.~23.1]{Fuchs_bookI}); upon enlarging it we can assume $\kappa=\cont=2^{\alo}$. It is therefore contained in $\ZZ(p^\infty)^{\alo}$, see Ex.~3 in~\cite[\S23]{Fuchs_bookI}. We conclude again by \Cref{lem:product} since $\ZZ(p^\infty)$ is countable.
\end{proof}

\section{On McKenzie's and Churkin's examples}\label{sec:Churkin}

Churkin~\cite{Churkin05} defines a group $G$ by generators and relations as follows. Let $I$ be an index set of cardinality~$\cont$.
\[
G \ = \ \Big\langle  a_i, b_i, c \ (i\in I) : \text{all generators commute, except $[a_i, b_i] = c \  (\forall \, i)$} \Big\rangle
\]
Earlier, McKenzie defined an almost identical group $M$ with the same notation as above but imposing in addition that all $a_i$ and $b_i$ have order two (proof of Thm.~4 in~\cite{McKenzie71}). We shall work with $G$, but the arguments work equally well for $M$.

Observe that there is an epimorphism from $G$ to a free abelian group $A$ of continuum rank~$\cont$. Indeed define $A$ to be free on a set $\{\bar a_i, \bar b_i : i\in I\}$ and map $a_i\mapsto \bar a_i$, $b_i\mapsto \bar b_i$. The kernel of this epimorphism is generated by $c$. In particular, $G$ is nilpotent and of size~$\cont$. (In the case of $M$, replace $A$ by the free $\ZZ/2\ZZ$-module $B$ on $\{\bar a_i, \bar b_i : i\in I\}$.)

\begin{thm}[McKenzie, Churkin]\label{thm:Churkin}
Every homomorphism from $M$ or $G$ to $\Sym(\alo)$ factors through $A$, respectively $B$.
\end{thm}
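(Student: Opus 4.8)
The plan is to exploit the structure of $M$ and $G$ as central extensions of $B$, respectively $A$, by the cyclic group $\langle c\rangle$, and to show that any faithful-looking action on a countable set must collapse the center. I will work with $G$; the argument for $M$ is the same with $\ZZ/2\ZZ$ coefficients. Suppose $\pi\colon G\to\Sym(\alo)$ is a homomorphism and assume for contradiction that $\pi(c)\neq 1$. Fix a point $x\in\NN$ moved by $\pi(c)$, and let $H=\Stab_{\pi(G)}(x)$, a subgroup of countable index in $\pi(G)$. Pulling back, $G_0=\pi\inv(H)$ is a cocountable subgroup of $G$ not containing $c$ (since $\pi(c)x\neq x$). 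So it suffices to show: \emph{$G$ admits no cocountable subgroup avoiding $c$}, i.e.\ every cocountable subgroup of $G$ contains $c$.

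The key step is a pigeonhole/counting argument on the generators $a_i,b_i$ indexed by the continuum-sized set $I$. First I would reduce to understanding cocountable subgroups $G_0<G$ via the quotient map $q\colon G\to A$ (resp.\ $B$). Since $\langle c\rangle$ is contained in the derived subgroup and is central, and since $G/\langle c\rangle\cong A$ is free abelian, the commutator pairing descends to an alternating bi-additive form $\omega\colon A\times A\to\langle c\rangle$ determined by $\omega(\bar a_i,\bar b_i)=c$ and $\omega$ vanishing on all other pairs of basis elements. Concretely, for $u,v\in G$ the commutator $[u,v]$ equals $\omega(q(u),q(v))$. Now, given a cocountable $G_0<G$, its image $q(G_0)$ is cocountable in $A$; the main point is that because $A$ is free of rank $\cont$ but the quotient $A/q(G_0)$ is countable, for all but countably many indices $i$ the basis elements $\bar a_i$ lie in $q(G_0)$, and likewise for $\bar b_i$ — more carefully, one shows there are two indices $i\neq j$ such that representatives of $\bar a_i$ and of $\bar b_i$ (for a single well-chosen $i$) can be found inside $G_0$. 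Indeed, pick elements $\alpha,\beta\in G_0$ with $q(\alpha)=\bar a_i$, $q(\beta)=\bar b_i$ for a common index $i$: such $i$ exists since the set of $i$ for which $\bar a_i\notin q(G_0)$ is countable (the cosets $\bar a_i+q(G_0)$ would otherwise give uncountably many distinct classes in the countable group $A/q(G_0)$, being distinct because the $\bar a_i$ are part of a basis) and similarly for $\bar b_i$. Then $[\alpha,\beta]=\omega(\bar a_i,\bar b_i)=c$, so $c\in G_0$. This contradicts $c\notin G_0$ and completes the proof.

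The step I expect to be the main obstacle is making the pigeonhole argument fully rigorous: one must be careful that "all but countably many $\bar a_i$ lie in $q(G_0)$" really follows from countability of $A/q(G_0)$, using that the $\bar a_i$ together with the $\bar b_i$ form a free basis, so that distinct $\bar a_i$ that are congruent mod $q(G_0)$ would force their difference $\bar a_i-\bar a_j$ into $q(G_0)$ — this is fine, but one then needs that \emph{the same} index $i$ works simultaneously for $\bar a_i$ and $\bar b_i$, which follows because the union of two countable bad sets is countable while $I$ is uncountable. A secondary point to handle cleanly is the identity $[\alpha,\beta]=c$ when $q(\alpha)=\bar a_i$ and $q(\beta)=\bar b_i$: writing $\alpha=a_i c^{m}\cdot(\text{product of other commuting generators})$ and similarly for $\beta$, centrality of $c$ and bi-additivity of the commutator map in a class-$2$ nilpotent group give $[\alpha,\beta]=[a_i,b_i]=c$ regardless of the correction terms, since all cross-commutators among distinct-index generators vanish and $c$ is central. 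Once these two points are nailed down, the reduction from a homomorphism into $\Sym(\alo)$ to a cocountable subgroup avoiding $c$ is the routine orbit–stabilizer observation recorded in Section~\ref{sec:prelim}, and the theorem follows.
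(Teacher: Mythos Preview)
Your overall strategy is sound and close to the paper's, but there is a genuine gap in the pigeonhole step. You claim that all but countably many $\bar a_i$ lie in $q(G_0)$, arguing that otherwise the cosets $\bar a_i + q(G_0)$ would give uncountably many distinct elements of the countable group $A/q(G_0)$. But nothing forces these cosets to be distinct: being part of a free basis of $A$ says nothing about their images in $A/q(G_0)$. Concretely, fix $i_0\in I$ and let $S<A$ be generated by all differences $\bar a_i - \bar a_{i_0}$ and $\bar b_i - \bar b_{i_0}$. Then $A/S\cong\ZZ^2$ is countable, yet \emph{no} $\bar a_i$ lies in $S$ (the sum of the $\bar a$-coordinates vanishes identically on $S$). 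Taking $G_0=q^{-1}(S)$ gives a cocountable subgroup with $q(G_0)=S$, so your mechanism for producing an index $i$ with $\bar a_i,\bar b_i\in q(G_0)$ fails outright, even though $c\in G_0$ holds here for the trivial reason that $\ker q\se G_0$.

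The repair is exactly the observation you record and then set aside: pigeonhole only gives an uncountable $J\se I$ on which $i\mapsto \bar a_i + q(G_0)$ is constant, so that the \emph{differences} $\bar a_i-\bar a_j$ lie in $q(G_0)$ for $i,j\in J$; then a further uncountable $K\se J$ with $\bar b_i-\bar b_k\in q(G_0)$ for $i,k\in K$. Choosing three distinct indices $i,j,k\in K$ and lifting the two differences to $\alpha,\beta\in G_0$, bilinearity of the commutator pairing yields
\[
[\alpha,\beta]=\omega(\bar a_i-\bar a_j,\ \bar b_i-\bar b_k)=\omega(\bar a_i,\bar b_i)=c,
\]
since $i,j,k$ are distinct. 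This is precisely Churkin's argument as given in the paper, phrased directly on the action rather than through $q(G_0)$: one takes $J$ so that $a_i x$ is constant, then $K\se J$ so that $b_i x$ is constant, and observes that $a_j^{-1}a_i$ and $b_k^{-1}b_i$ fix $x$ while their commutator is $c$. So your proposal needs three indices and differences, not a single index with $\bar a_i,\bar b_i\in q(G_0)$.
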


Since we found library access to~\cite{Churkin05} difficult, we reproduce its beautiful argument:

\begin{proof}[Churkin's proof of \Cref{thm:Churkin}]
Let $X$ be a countable set with a $G$-action. We need to show that $c$ fixes any given $x\in X$. Consider the map $I\to X$, $i\mapsto a_i x$. Since $I$ is uncountable, it contains an uncountable subset $J$ such that $a_i x$ is constant over $i\in J$. For the same reason, there is an uncountable subset $K\se J$ such that $b_i x$ is constant over $i\in K$. Choose three distinct indices $i,j,k$ all in $K$. Since both $a_j\inv a_i$ and $b_k\inv b_i$ fix $x$, so does $[a_j\inv a_i, b_k\inv b_i]$. On the other hand, this commutator is $c$ because $a_j$ commutes to $a_i, b_k, b_i$ and $b_k$ commutes to $a_j, a_i, b_i$. The same proof works for $M$.
\end{proof}

To deduce that $G$ is not countably representable, it remains of course to prove that the subgroup generated by $c$ in $G$ is not trivial. Unfortunately the argument given in~\cite{Churkin05}, aiming to map $G$ onto the integral Heisenberg group, does not work --- and indeed \Cref{thm:Churkin} rules out witnessing the non-triviality of $c$ in any countable quotient.

There is nonetheless a direct argument. Consider the central extension
\[
0 \lra \ZZ \lra E \lra A \lra 0
\]
given by the following two-cocycle (factor set) $f\colon A \times A \to \ZZ$. Denote by $\alpha_i\colon A\to \ZZ$ the coefficient map of $\bar a_i$ and likewise $\beta_i$ for $\bar b_i$. Then define $f(x,y)= \sum_{i\in I} \alpha_i(x) \beta_i(y)$, noting that the sum is finite for each $x,y\in A$. This is a normalised two-cocycle and hence defines an extension $E$ as above (see e.g.\ Ex.~IV.3.8 and~V.6.5 in~\cite{Brown_coho} for the connection between $\ZZ$-bilinear maps and two-cocycles on abelian groups). Explicitly, recall that the underlying set for $E$ is $\ZZ \times A$ with multiplication
\[
(n, x)\cdot (m,y) = \big(n+m+f(x,y), x+y\big).
\]
The presentation of $G$ implies that there is an epimorphism $G\to E$ mapping $c$ to $(1_\ZZ,0_A)$ and compatible with the epimorphisms $G\to A$ and $E\to A$. The latter property ensures that this map $G\to E$ is actually an isomorphism $G\cong E$. By construction, $(1_\ZZ,0_A)$ is non-trivial in $E$ and indeed has infinite order.

This confirms that $G$ is not countably representable and further implies that for all $n\geq 2$ the quotient of $G$ by $\langle c^n\rangle$ gives an extension of $A$ by $\ZZ/n \ZZ$ which is not countably representable either.

\medskip

The above explicit construction can also be used as follows. The cocycle $f$ extends by bilinearity to a $\QQ$-valued cocycle on the free $\QQ$-module on $\{\bar a_i, \bar b_i : i\in I\}$, which is isomorphic to $\RR$. We therefore obtain an extension
\[
0 \lra \QQ \lra F \lra \RR \lra 0
\]
containing Churkin's group. Defining $H=F\times \RR$, this further implies the following curiosity, in marked contrast to our \Cref{thm:nilpotent}:

\begin{cor}
There exists a nilpotent group $H$ given by a central extension
\[
0 \lra \RR \lra H \lra \RR \lra 0
\]
which is not countably representable.\qed
\end{cor}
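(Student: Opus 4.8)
The plan is to take $H = F\times\RR$, with $F$ the extension constructed just above, and to verify the three features demanded by the statement: that $H$ carries a central extension of the displayed shape, that it is nilpotent, and that it is not countably representable.

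First I would produce the extension. Recall $F$ sits in $0 \lra \QQ \lra F \lra \RR \lra 0$ with $\QQ$ central. Composing the first projection $F\times\RR\to F$ with $F\to\RR$ gives a surjection $H\to\RR$ whose kernel is $N=\QQ\times\RR\le F\times\RR$. This $N$ is central in $H$: the copy of $\QQ$ is central in $F$ by construction, the $\RR$-factor is central in the product, and the two factors commute. Since $N$ is a $\QQ$-vector space of dimension $1+\cont=\cont$, it is abstractly isomorphic to $\RR$, so we obtain a central extension $0\lra\RR\lra H\lra\RR\lra 0$ as wanted. The point of the extra direct factor $\RR$ is exactly to fatten the kernel $\QQ$ of the $F$-extension — which by itself is too small — into a copy of $\RR$.

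For nilpotency, since $f$ is $\QQ$-bilinear the commutator of two elements $(n,x),(m,y)$ of $F$ equals $(f(x,y)-f(y,x),0)$, which lies in the central $\QQ$; hence $[F,F]$ is central and $F$ is nilpotent of class $2$ (it is non-abelian because $f$ is not symmetric), so $H=F\times\RR$ is nilpotent too. For non-representability, $H$ contains Churkin's group $G$ via $G\hookrightarrow F\hookrightarrow F\times\RR$; since $G$ is not countably representable (by the discussion following \Cref{thm:Churkin}) and the class of countably representable groups is closed under subgroups, neither is $H$. I do not expect any real obstacle here; the one step that repays a moment's care is the abstract identification $\QQ\oplus\RR\cong\RR$, which is what makes the kernel of the displayed extension a genuine copy of $\RR$ rather than merely a group containing one.
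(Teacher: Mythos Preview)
Your proposal is correct and follows exactly the paper's approach: the paper defines $H=F\times\RR$ in the paragraph immediately preceding the corollary (hence the \qed), and your write-up simply unpacks the details the paper leaves implicit---identifying the kernel $\QQ\times\RR$ with $\RR$ as a $\QQ$-vector space of dimension $\cont$, checking nilpotency, and invoking the embedding $G\hookrightarrow F\hookrightarrow H$ for non-representability.
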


Finally, we present the proof of \Cref{thm:Churkin:top}, establishing that $G$ and $M$ do not admit any faithful representation to any Hausdorff topological group that is separable or Lindel{\"o}f. This contains \Cref{thm:Churkin} since $\Sym(\alo)$ is a Polish group.

\begin{proof}[Proof of \Cref{thm:Churkin:top}]
We keep the notation introduced above for Churkin's group $G$ and for its explicit construction as central extension $E$ (the proof for $M$ is the same).

We claim that if $G$ is covered by countably many left translates of some subset $W\se G$, then the central generator $c$ is a commutator of two elements of the product set $W\inv W$.

Indeed there is an uncountable subset $J\se I$ such that all $a_j$ with $j\in J$ belong to a fixed translate of $W$, say $r W$ for some $r\in G$. Likewise there is $K\se J$ infinite and $s\in G$ such that $b_k\in s W$ for all $k\in K$. As in \Cref{thm:Churkin}, we observe that $c=[a_j\inv a_i, b_k\inv b_i]$ whenever $i,j,k$ are distinct. Taking all three indices in $K$, both $a_j\inv a_i$ and $b_k\inv b_i$ belong to $W\inv W$, confirming the claim.

Consider now a group homomorphism $\pi\colon G \to H$ to some separable or Lindel{\"o}f topological group $H$. It suffices to show that $\pi(c)$ belongs to every neighbourhood $U\se H$ of the identity in $H$. 

To that end, choose a symmetric neighbourhood $V$ of the identity in $H$ such that $V^{16} \se U$. There exists a sequence $\{h_n\}$ in $H$ such that $\{h_n V\}$ covers $H$. Indeed, in the Lindel{\"o}f case this holds by definition of that property. In the separable case, we can take any sequence $\{h_n\}$ that is dense in $H$ since then for any $h\in H$ the sequence $h_n\inv h$ must meet $V$.

In particular, $\pi(G)$ is covered by $\{h_n V\}$ and upon extracting we assume that moreover $h_n V$ meets $\pi(G)$ for every $n$. We then choose $g_n\in G$ with $\pi(g_n) \in h_n V$. It now follows
\[
h_n V \se \pi(g_n) V\inv V = \pi(g_n) V^2.
\]
Thus $\pi(G)$ is covered by the sequence $\pi(g_n) V^2$ and we conclude that $G$ is covered by countably many left translates of $\pi\inv(V^2)$.

We now apply the initial claim to $W=\pi\inv(V^2)$ and deduce that $c$ is a commutator of two elements of $W\inv W =\pi\inv(V^4)$. Therefore, $\pi(c)$ is the product of four elements of $V^4$ and thus belongs indeed to $V^{16} \se U$, completing the proof.
\end{proof}

The reader will have noticed that this proof is inspired by two sources: Churkin's proof above and the use of the Steinhaus property made in the separable case by Rosendal and Solecki in Prop.~2 of~\cite{Rosendal-Solecki}.


\bibliographystyle{halpha-abbrv} 
\bibliography{../BIB/ma_bib}

\end{document}